\theoremstyle{thmstyletone}%
\newtheorem{theorem}{Theorem}
\newtheorem{proposition}[theorem]{Proposition}%
\newtheorem{lemma}[theorem]{Lemma}%
\newtheorem{corollary}[theorem]{Corollary}%
\theoremstyle{thmstyletwo}%
\newtheorem{remark}{Remark}%
\theoremstyle{thmstylethree}%
\begin{document}

\title[On the probability of generating matrix incidence rings]{On the probability of generating matrix incidence rings}


\author*[1]{\fnm{N.A. Kolegov}}\email{na.kolegov@yandex.ru}

\affil*[1]{\orgname{Lomonosov Moscow State University},  \city{Moscow}, \postcode{119991}, \country{Russia}}


\abstract{The probability that a tuple of matrices together with all scalars generates a finite incidence ring is calculated. It is proved that all real and complex finite-dimensional incidence algebras are generated by two randomly chosen matrices.}

\keywords{incidence algebras and rings, finite posets, probability of generating algebras and rings, finite rings}


\pacs[MSC Classification]{16S50 06A11 60C05}

\maketitle
\section{Introduction }

Generators and relations of algebraic structures is the classical research field.  For~finite structures, a natural problem is to estimate the number of generating subsets. A more general approach, which can cover infinite structures in some cases, is to introduce an appropriate measure and estimate the probability of generating the algebraic structure.

A computer algebra approach of extracting generators and relations of matrix algebras was investigated by \citet*{Car06}. \citet*{AiKaMu13} showed that a real matrix $*$-algebra can be generated by four randomly chosen symmetric matrices. \citet*{KrMazPet12} developed methods to calculate probability of generating algebras which are free modules over orders in algebraic number fields. \citet*{SerSha24} proved that probability that two randomly-chosen elements generate a ``bounded'' finite associative algebra over a field converges to 1 as the cardinality of the algebra tends to infinity.

This paper studies generators of  matrix incidence rings over finite posets. Such rings consist of matrices with special patterns of zeros. For example, the rings of diagonal and upper-triangular matrices are particular examples of incidence rings. In general, for any finite poset and any base ring, one can construct the incidence ring. There are interesting applications in combinatorics, see the classical book by \citep{SpODon97}. Two finite posets are isomorphic iff their incidence algebras over a field are isomorphic~\citep[Theorem~7.2.2]{SpODon97}.  Incidence algebras are a rare case when all generating subsets can be described directly \citep{LonRos00}. Moreover, the theory of generators can be extended to matrices over noncommutative rings~\citep{Ko23}.

The main result of this paper is Theorem~\ref{number_of_gen}. It provides the precise formula for the number of matrix tuples of the given length that together with all scalar matrices generate an incidence ring of an arbitrary finite poset over an arbitrary finite base ring. As a consequence, the probability of generating a finite incidence ring by a tuple of matrices is determined. The last section considers a similar question for incidence algebras over the fields of real and complex numbers. In this case it is shown that the set of pairs of matrices that do not generate the algebra is an affine algebraic set. A natural probabilistic interpretation of this result shows that the algebra is generated by two random matrices.

This paper is organized as follows. In~Section~\ref{prelim}, necessary definitions and some preliminary results are given. In~Section~\ref{mat_r}, the number of matrix tuples of given length that together with all scalar matrices generate an incidence ring over a finite full matrix algebra is calculated (Lemma~\ref{mat_ring_formula}). In~Section~\ref{finite_r}, the previous result is generalized to an arbitrary finite incidence a ring (Theorem~\ref{number_of_gen}). In Section~\ref{measure}, a natural probability measure is introduced for matrix tuples on a sphere and it is shown that two randomly chosen matrices generate an incidence algebra over the fields of real and complex numbers (Corollary~\ref{prob_over_R_C}).

\section{Preliminaries}\label{prelim}

Throughout the paper, ${\mathcal R}$ denotes a nonzero associative ring with $1$, which can be noncommutative. Let $M_n({\mathcal R})$ be the ring of all $n\times n$ matrices over~${\mathcal R}$. Denote~by~$T_n({\mathcal R})$ and $D_n({\mathcal R})$ the subrings of $M_n({\mathcal R})$ consisting of upper-triangular and diagonal matrices, respectively. The $i,j$-th entry of a matrix $A\in M_n({\mathcal R})$ is denoted by $(A)_{ij}$, or $a_{ij}$. Let $I_n$, or $I$, be the identity matrix in $M_n({\mathcal R})$ and $E^{(n)}_{ij}$, or $E_{ij}$, be the $i,j$-th matrix unit in $M_n({\mathcal R})$. Also, ${\mathcal R} I_n =\{r\cdot I_n~|~r\in{\mathcal R}\}$ is the set of all scalar matrices in $M_n({\mathcal R})$.

Consider a partial order $\preceq$ on the set $\{1,\ldots, n\}$. For $i,j\in\{1,\ldots,n\}$, introduce the {\em covering} relation $i\prec: j$ which means that there is no element~$k$ satisfying $i\prec k \prec j$.

An {\it incidence ring} is defined as
\[{\mathcal A}={\mathcal A}_n(\preceq,{\mathcal R}) = \{A\in M_n({\mathcal R})~|~(A)_{ij} = 0~\text{for all pairs}~i\not\preceq j\}.\]
Then ${\mathcal A}_n(\preceq,{\mathcal R})$ is a finitely-generated free right (or left) ${\mathcal R}$-module with a basis of the matrix units $\{E_{ij}~|~i\preceq j\}$. Since the partial order $\preceq$ is a transitive relation, ${\mathcal A}$~is~closed under the matrix multiplication. Reflexivity of $\preceq$ implies that ${\mathcal A}$ contains $D_n({\mathcal R})$ as a subring. Also, any incidence ring can be naturally embedded into $T_n({\mathcal R})$ \citep[Proposition~1.2.7]{SpODon97} because $\preceq$ is antisymmetric. Note that $D_n({\mathcal R}) ={ {\mathcal A}_n(=,{\mathcal R})}$ and $T_n({\mathcal R}) = {\mathcal A}_n(\leq,{\mathcal R})$, where $\leq$ is the standard linear order. See \citep{KolMar19} for some discussion of functional and matrix languages for describing incidence rings.

The Jacobson radical of an incidence ring was calculated by \citet[Proposition~4]{Voss80}:
\begin{equation}\label{eqJ}
	J({\mathcal A}) = \sum_{i=1}^n E_{ii}J({\mathcal R}) + \sum_{i\prec j} E_{ij} {\mathcal R}.
\end{equation}

Let ${\mathcal T}$ be any associative ring with identity $1_{\mathcal T}$ and $S$ be its nonempty subset. Denote by $\langle{S}\rangle_{{\text{Ring}}}$ the subring generated by $S$, that is, the minimum subring of ${\mathcal T}$ that contains $S\cup\{1_{\mathcal T}\}$. If ${\mathcal T} = \langle{S}\rangle_{{\text{Ring}}}$, then $S$ is said to generate ${\mathcal T}$ as a ring. If ${\mathcal T}$ is an algebra over a commutative ring ${\mathcal K}$ and $\langle{S\cup {\mathcal K} 1_{\mathcal T}}\rangle_{{\text{Ring}}}=T$, then $S$ is said to generate ${\mathcal T}$ as a ${\mathcal K}$-algebra.

Consider an incidence ring ${\mathcal A} = {\mathcal A}_n(\preceq,{\mathcal R})$. Given a~natural number $m$, denote by ${\mathcal A}^m = {\mathcal A}\times\ldots\times{\mathcal A}$ the direct product of $m$ copies of the ring ${\mathcal A}$.
Introduce the~set
\[{\text{Gen}}_m({\mathcal A},{\mathcal R}) = \{(A_1,\ldots,A_m)\in{\mathcal A}^m~|~\{A_i\}_{i=1}^m\cup {\mathcal R} I~\text{generates}~{\mathcal A}~\text{as a ring}\}\]
following the similar notation in~\citep{KrMazPet12}. Then the minimal number of generators can be defined as
\[{\mathrm  {mgen}}({\mathcal A})=\min\{m\in{\mathbb N}~|~{\text{Gen}}_m({\mathcal A},{\mathcal R})\neq\varnothing\}.\]
Generators of incidence rings were described in \citep[Theorem]{LonRos00} and \citep[Theorem~1.1]{Ko23}. The approach based on ring theory was developed in \citep[Lemma~4]{Ker98}. The quantity ${\mathrm  {mgen}}({\mathcal A})$ was calculated in the case when the ring~${\mathcal R}$ is semilocal~\citep[Theorem~1.2]{Ko23}. In particular, this result is applicable to any finite ring ${\mathcal R}$.

Throughout the paper, ${\mathbb F}$ is an arbitrary field and ${\mathbb F}_q$ is a finite field of cardinality~$q$.  In order to calculate cardinality $|{\text{Gen}}_m({\mathcal A},M_k({\mathbb F}_q))|$  we need two simple lemmas from linear algebra. The proofs of them are given for the sake of completeness.

\begin{lemma}\label{operator_lemma}
	Let $\varphi: M_k({\mathbb F}) \longrightarrow M_k({\mathbb F})$ be an ${\mathbb F}$-linear mapping, $k\geq 1$. Then there exist such natural number $r$ and matrices $P_1,\ldots, P_r$, $Q_1,\ldots, Q_r$ that
	\[\varphi(X) = \sum_{i = 1}^r P_i X Q_i,\quad \forall X\in M_k({\mathbb F}).\]
\end{lemma}
\begin{proof}
	Consider a basis of the matrix units $\{E_{\alpha\beta}\}_{\alpha,\beta=1}^k$ in $M_k({\mathbb F})$. For all indexes $\alpha,\beta,\gamma,\delta \in \{1,\ldots,k\}$ denote by $\varepsilon_{\alpha,\beta,\gamma,\delta}$ the linear mapping that is defined on the~basis elements as
	\begin{equation*}
		\varepsilon_{\alpha,\beta,\gamma,\delta} (E_{ij}) = 
		\begin{cases}
			E_{\gamma\delta}  & \text{if $(i,j) = (\alpha,\beta)$,} \\
			O & \text{otherwise}.
		\end{cases}
	\end{equation*}
	We have $k^4$ operators $\varepsilon_{\alpha,\beta,\gamma,\delta}$, which are linearly independent over ${\mathbb F}$. Hence, they constitute a basis of the space of linear operators on $M_k({\mathbb F})$. At the same time, $\varepsilon_{\alpha,\beta,\gamma,\delta} (X) = E_{\gamma \alpha}X E_{\beta \delta}$ for all $X\in M_k({\mathbb F})$.
\end{proof}

\begin{lemma}\label{system_of_operators_lemma}
	
	Let $V$ be a linear space over a field ${\mathbb F}$. Consider arbitrary vectors $v_1,\ldots, v_m$ and $w_1,\ldots, w_m$ form $V$. Then the following two conditions are equivalent:
	
	1) There exist linear operators $\varphi_i: V\longrightarrow V$ for $i = 1,\ldots, m$ that satisfy	
	\begin{equation*}
		\begin{cases}
			\varphi_1(v_1) + \ldots + \varphi_m(v_m) = 0,\\
			
			\varphi_1(w_1) + \ldots + \varphi_m(w_m) \neq 0.
		\end{cases}
	\end{equation*}
	
	2) There is no such $\lambda\in{\mathbb F}$ that $w_i = \lambda v_i$ for all $i = 1,\ldots, m$.
	\end{lemma}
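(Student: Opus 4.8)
The plan is to prove the two implications separately, with the reverse one $(2)\Rightarrow(1)$ carrying essentially all the weight. The forward implication $(1)\Rightarrow(2)$ I would settle immediately by contraposition: if there were a scalar $\lambda$ with $w_i=\lambda v_i$ for all $i$, then linearity would give $\sum_i\varphi_i(w_i)=\lambda\sum_i\varphi_i(v_i)=\lambda\cdot 0=0$ for any operators $\varphi_i$ satisfying the first equation, so the second equation of the system could never hold.

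For $(2)\Rightarrow(1)$, the main idea is to look for the operators in the simplest possible form, namely rank-one operators that share one common image vector: fix any nonzero $u\in V$ (hypothesis $(2)$ forces $V\neq 0$, so such $u$ exists) and set $\varphi_i(x)=f_i(x)\,u$ for linear functionals $f_i\in V^{*}$ still to be chosen. With this ansatz both vector conditions reduce to scalar ones, because $\sum_i\varphi_i(v_i)=\bigl(\sum_i f_i(v_i)\bigr)u$ and $\sum_i\varphi_i(w_i)=\bigl(\sum_i f_i(w_i)\bigr)u$. It therefore suffices to find functionals $f_1,\dots,f_m$ with $\sum_i f_i(v_i)=0$ and $\sum_i f_i(w_i)\neq 0$, and the whole problem becomes one about the image of the linear map $(f_i)\mapsto\bigl(\sum_i f_i(v_i),\sum_i f_i(w_i)\bigr)$ into $\mathbb{F}^2$: I want this image to meet the vertical axis away from the origin.

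To produce the functionals I would argue by cases on the pairs $(v_i,w_i)$, using throughout the standard fact that a nonzero vector is never annihilated by all of $V^{*}$. If some pair is linearly independent, or has $v_i=0\neq w_i$, then a single functional vanishing on $v_i$ but not on $w_i$ (with all other $f_j=0$) already works. Otherwise every pair is proportional, $w_i=\lambda_i v_i$, and hypothesis $(2)$ supplies two indices $p,q$ with $v_p,v_q\neq 0$ and $\lambda_p\neq\lambda_q$; I would then pick $f_p,f_q$ (and $f_j=0$ elsewhere) with $f_q(v_q)=-f_p(v_p)\neq 0$, so that the $v$-sum vanishes while the $w$-sum equals $(\lambda_p-\lambda_q)f_p(v_p)\neq 0$. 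The genuine obstacle here is not any computation but the bookkeeping forced by the word \emph{uniform} in $(2)$: the hypothesis denies a single $\lambda$ working for all indices simultaneously, so the crux is to translate that global non-existence into a concrete pair of witnessing indices, and it is precisely the rank-one reduction that makes this translation clean.
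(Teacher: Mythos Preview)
Your proof is correct, but the route differs from the paper's. For $(2)\Rightarrow(1)$ the paper packages the data as two single vectors $v=(v_1,\dots,v_m)$ and $w=(w_1,\dots,w_m)$ in the direct sum $V^m$; hypothesis~(2) then reads simply $w\neq\lambda v$ for all $\lambda$, so one operator $\Phi:V^m\to V^m$ can be chosen with $\Phi(v)=0$ and $\Phi(w)\neq0$, and the $\varphi_i$ are recovered as $\pi_k\circ\Phi\circ\iota_i$ for a projection index $k$ on which $\Phi(w)$ is nonzero. This direct-sum trick sidesteps your case analysis entirely: the distinction between ``some pair $(v_i,w_i)$ independent'' and ``all pairs proportional with non-uniform scalars'' is absorbed into the single statement that $v,w\in V^m$ are not parallel. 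Your rank-one ansatz $\varphi_i=f_i(\cdot)\,u$ is in turn more explicit and uses only $V^*$ rather than operators on $V^m$, at the cost of the bookkeeping you yourself flag as the crux. Both arguments are elementary and of comparable length; the paper's is a bit slicker, yours a bit more constructive.
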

\begin{proof} 
	1) $\Rightarrow$ 2) The direct substitution $w_i = \lambda v_i$ into the system results in contradiction.
	
	2) $\Rightarrow$ 1) Consider the outer direct sum $V^m = V\oplus\ldots\oplus V$~$(m~\text{times})$. For~$i=1,\ldots,m$, we denote  the i-th direct summand by $V_i$, the projection by ${\pi_i:V^m{\longrightarrow} V_i}$, and the natural embedding by $\iota_i: V_i{\longrightarrow} V^m$.
	The space $V^m$ contains such vectors $v,w$ that their projections onto each $V_i$ equal $v_i$ and $w_i$, respectively. Item~2 implies that $w\neq \lambda v$ for any $\lambda\in{\mathbb F}$. It means that either the vectors $v,w$ are linearly independent, or $v = 0$, but $w\neq 0$. Anyway, there exists such linear operator $\Phi: V^m\longrightarrow V^m$ that $\Phi(w)=u\neq 0$ and $\Phi(v) = 0$. Since $u\neq 0$, it has at least one nonzero projection, say $\pi_{k}(u)\neq 0$ for some $k\in\{1,\ldots,m\}$. Then we set ${\varphi}_i = \pi_k\circ\Phi\circ \iota_i$. These operators indeed satisfy the system since
	\begin{align*}
		& \sum_{i = 1}^m {\varphi}_i(v_i) =  (\pi_k\circ \Phi )\left( \sum_{i = 1}^m  \iota_i(v_i)\right) = \pi_k(\Phi(v)) =\pi_k(0)= 0,\\
		&\sum_{i = 1}^m {\varphi}_i(w_i) =  (\pi_k\circ \Phi )\left( \sum_{i = 1}^m  \iota_i(w_i)\right) = \pi_k(\Phi(w))=\pi_k(u) \neq 0.
	\end{align*}
\end{proof}

\section{Incidence rings over finite matrix algebras}\label{mat_r}

The main result of this section is an expression for $|{\text{Gen}}_m({\mathcal A}, M_k({\mathbb F}_q))|$ in Lemma~\ref{mat_ring_formula}. Before this, we need another lemma, which is a modification of \citep[Theorem~1.1]{Ko23}.

\begin{lemma}\label{criterion_reformulation_over_matrix_ring}
	Let ${\mathcal R}\cong M_k({\mathbb F})$ be the ring of all $k\times k$ matrices over a field~${\mathbb F}$, ${k\geq 1}$. Consider an incidence ring ${\mathcal A} = {\mathcal A}_n(\preceq, {\mathcal R})$, $n\geq 2$ and any its non\-emp\-ty finite subset $S = \{A_1,\ldots, A_m\}$. Then $S\cup {\mathcal R} I_n$ generates ${\mathcal A}$ as ring if and only if 
	
	1) the $n\times m$ matrix over ${\mathcal R}$
	\[\Delta = \begin{pmatrix}
		(A_1)_{11}&\ldots &(A_m)_{11}\\
		\ldots &\ldots &\ldots\\
		(A_1)_{nn}&\ldots &(A_m)_{nn}
	\end{pmatrix} \] 
	has no identical rows;
	
	2) for each pair $i\prec : j$, the vectors
	\begin{align*}
		&v = ((A_1)_{ii} - (A_1)_{jj},~ (A_2)_{ii} - (A_2)_{jj}, ~\ldots,~ (A_m)_{ii} - (A_m)_{jj})\in{\mathcal R}^m,\\
		&w = ((A_1)_{ij}, ~(A_2)_{ij}, ~\ldots, ~(A_m)_{ij})\in{\mathcal R}^m
	\end{align*}

	are linearly independent \underline{over the field ${\mathbb F}$}. 
\end{lemma}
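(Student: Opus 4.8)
Write $\mathcal{B}=\langle S\cup\mathcal{R}I_n\rangle_{\mathrm{Ring}}$; the goal is the biconditional between $\mathcal{B}=\mathcal{A}$ and the conjunction of conditions~1 and~2. The plan is to organize everything around the radical. Since $\mathcal{R}\cong M_k(\mathbb{F})$ is simple, $J(\mathcal{R})=0$, so by \eqref{eqJ} the ideal $J:=J(\mathcal{A})=\sum_{i\prec j}E_{ij}\mathcal{R}$ is nilpotent and $\mathcal{A}/J\cong D_n(\mathcal{R})\cong\mathcal{R}^n$, a product of $n$ simple rings. First I would show condition~1 is equivalent to $\mathcal{B}+J=\mathcal{A}$, i.e.\ to surjectivity of the diagonal projection $\mathcal{B}\to\mathcal{R}^n$. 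As $\mathcal{B}$ contains the diagonal scalars $\{(r,\dots,r)\}$, a Goursat-type argument for products of simple rings reduces this surjectivity to separating every pair of factors: for $i\neq l$ one produces $c^{(l)}\in\mathcal{B}$ equal to $1$ in slot $i$ and $0$ in slot $l$, and the product $\prod_{l\neq i}c^{(l)}$ collapses to the block idempotent $e_i$ precisely when no two factors are forced equal — that is, exactly when $\Delta$ has no identical rows.

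The device for the off-diagonal part is that for each covering pair $i\prec: j$ the assignment $\Pi_{ij}:A\mapsto\left(\begin{smallmatrix}(A)_{ii}&(A)_{ij}\\0&(A)_{jj}\end{smallmatrix}\right)$ is a \emph{ring} homomorphism $\mathcal{A}\to T_2(\mathcal{R})$; this uses the covering hypothesis decisively, since the absence of any $k$ with $i\prec k\prec j$ forces $(AB)_{ij}=(A)_{ii}(B)_{ij}+(A)_{ij}(B)_{jj}$. Because $\mathcal{A}$ surjects onto $T_2(\mathcal{R})$, the identity $\mathcal{B}=\mathcal{A}$ requires $\Pi_{ij}(\mathcal{B})=T_2(\mathcal{R})$ for every covering pair, and the whole off-diagonal question localizes to a single problem: when do the scalars $rI_2$ together with the blocks $G_t:=\left(\begin{smallmatrix}(A_t)_{ii}&(A_t)_{ij}\\0&(A_t)_{jj}\end{smallmatrix}\right)$ generate $T_2(M_k(\mathbb{F}))$?

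Here Lemmas~\ref{operator_lemma} and \ref{system_of_operators_lemma} enter. The subring $\Pi_{ij}(\mathcal{B})$ surjects onto the semisimple quotient $\mathcal{R}\times\mathcal{R}$ iff some $(A_t)_{ii}\neq(A_t)_{jj}$, i.e.\ iff $v\neq0$; and to obtain the radical $\left(\begin{smallmatrix}0&\mathcal{R}\\0&0\end{smallmatrix}\right)$ it suffices, by simplicity of $\mathcal{R}$ as a bimodule over itself, to produce one element with equal diagonal entries and nonzero corner, for then subtracting the scalar $gI_2$ isolates a pure corner. Forming $\sum_{t,s}(P^t_sI_2)G_t(Q^t_sI_2)$ yields corner $\sum_t\varphi_t(w_t)$ and diagonal difference $\sum_t\varphi_t(v_t)$, where $\varphi_t(X)=\sum_s P^t_sXQ^t_s$; by Lemma~\ref{operator_lemma} the $\varphi_t$ range over all linear operators on $M_k(\mathbb{F})$, and by Lemma~\ref{system_of_operators_lemma} one can arrange $\sum_t\varphi_t(v_t)=0$ while $\sum_t\varphi_t(w_t)\neq0$ exactly when no $\lambda\in\mathbb{F}$ satisfies $w=\lambda v$. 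Conversely, if $w=\lambda v$ with $\lambda\in\mathbb{F}$, conjugating every block by $\left(\begin{smallmatrix}1&\lambda\\0&1\end{smallmatrix}\right)$ annihilates all corners and lands inside the diagonal, so $\Pi_{ij}(\mathcal{B})\neq T_2(\mathcal{R})$. Together with the diagonal requirement $v\neq0$ this shows $\Pi_{ij}(\mathcal{B})=T_2(\mathcal{R})$ iff $v,w$ are linearly independent over $\mathbb{F}$ — precisely condition~2. (The centre $Z(M_k(\mathbb{F}))=\mathbb{F}$ is what turns this into independence \emph{over $\mathbb{F}$}.)

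Necessity of~1 and~2 is then immediate from the two reductions. The main obstacle is the converse gluing: upgrading ``$\mathcal{B}+J=\mathcal{A}$ and each $\Pi_{ij}(\mathcal{B})=T_2(\mathcal{R})$'' to $\mathcal{B}=\mathcal{A}$. The difficulty is that the elements witnessing the corners have zero $(i,i)$- and $(j,j)$-entries but possibly nonzero entries on other diagonal positions, so they need not lie in $J$, and the idempotents one builds from condition~1 carry strictly-upper tails (conjugating them to the genuine $E_{ii}$ moves the scalar set, since $rI_n$ is central only for $r\in\mathbb{F}$). I would resolve this pair-by-pair through the homomorphism $\Theta:\mathcal{A}\to\mathcal{R}^n\ltimes E_{ij}\mathcal{R}$ onto the single-arrow incidence ring, whose radical is square-zero: there the obstruction to filling the corner is exactly the existence of a $(\pi_i,\pi_j)$-derivation $\mathcal{R}^n\to\mathcal{R}$ vanishing on scalars with $\mathrm{diag}(A_t)\mapsto w_t$, and a short computation shows every such derivation is a coboundary $d\mapsto d_ic-cd_j$ with $c\in Z(\mathcal{R})=\mathbb{F}$, recovering ``$w=cv$''. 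Once $E_{ij}\mathcal{R}\subseteq\mathcal{B}$ is secured for covering pairs, composing covering steps along $\preceq$-chains gives $E_{i'j'}\mathcal{R}\subseteq\mathcal{B}$ for all $i'\prec j'$, hence $J\subseteq\mathcal{B}$ and $\mathcal{B}\supseteq D_n(\mathcal{R})\oplus J=\mathcal{A}$. Alternatively one could bypass this last paragraph by quoting the general criterion \citep[Theorem~1.1]{Ko23} and only performing the reformulation of its covering-pair condition via Lemmas~\ref{operator_lemma}--\ref{system_of_operators_lemma}.
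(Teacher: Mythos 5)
Your necessity arguments and your local analysis of the covering-pair problem in $T_2({\mathcal R})$ are correct, and the latter is in substance the same computation as the paper's: simplicity of ${\mathcal R}\cong M_k({\mathbb F})$ turns the ideal conditions of \citep[Theorem~1.1]{Ko23} into ``nonzero'' conditions, Lemma~\ref{operator_lemma} turns sandwich sums $\sum_s P^t_s X Q^t_s$ into arbitrary ${\mathbb F}$-linear operators, and Lemma~\ref{system_of_operators_lemma} together with $v\neq 0$ (which follows from condition~1) converts the resulting operator system into linear independence of $v,w$ over ${\mathbb F}$. Your closing alternative --- quote \citep[Theorem~1.1]{Ko23} and only carry out this reformulation --- is exactly the paper's proof, and along that route everything you wrote is sound.

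The self-contained route, however, has a genuine gap in the final gluing paragraph. The homomorphism $\Theta$ onto the single-arrow ring ${\mathcal R}^n\ltimes E_{ij}{\mathcal R}$ has kernel $\sum_{(i',j')\neq(i,j),\ i'\prec j'}E_{i'j'}{\mathcal R}$, so the derivation argument (which is fine in itself: the derivation is inner, with $c\in Z({\mathcal R})={\mathbb F}$ because it kills scalars, recovering $w=cv$) only shows $\Theta({\mathcal B})\supseteq E_{ij}{\mathcal R}$, that is, $E_{ij}{\mathcal R}\subseteq{\mathcal B}+\ker\Theta$. This produces, for each $r\in{\mathcal R}$, an element of ${\mathcal B}$ with $(i,j)$-entry $r$ and zero diagonal but uncontrolled entries at all other strictly-upper positions; it does not yield $E_{ij}r\in{\mathcal B}$, which is what your chain-composition step ``$E_{ik}{\mathcal R}\cdot E_{kj}{\mathcal R}\subseteq{\mathcal B}$'' actually needs. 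Upgrading ``${\mathcal B}+J={\mathcal A}$ and $\Pi_{ij}({\mathcal B})=T_2({\mathcal R})$ for every covering pair'' to ${\mathcal B}={\mathcal A}$ requires a further induction along the radical filtration (lifting the idempotents and successively cleaning the longer-chain tails), and that is precisely the content of \citep[Theorem~1.1]{Ko23} which your sketch does not reproduce. Either supply that induction explicitly or take your own fallback and cite the theorem, as the paper does.
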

\begin{proof}
	Since the ring ${\mathcal R}$ is simple, its ideal is improper iff it contains a nonzero element. Then Item~$1$ of \citep[Theorem~1.1]{Ko23} can be rewritten as follows. For each pair $i\neq j$, there exists such $\alpha\in \{1,\ldots, m\}$ that $(A_\alpha)_{ii}\neq (A_\alpha)_{jj}$.  This is equivalent to the requiring that all rows of the matrix $\Delta$ are distinct.
	
	Next, consider a pair $i\prec : j$ and apply Item $2$ of \citep[Theorem~1.1]{Ko23}. Again simplicity of the ring ${\mathcal R}$ ensures that $\mathfrak{b}_{ij} = {\mathcal R}$ iff $\mathfrak{b}_{ij}\neq 0$. So the ring ${\mathcal R}$ must contain such elements $p_{\alpha,\beta}$, $q_{\alpha,\beta}$ for $\alpha = 1,\ldots,m$ and $\beta = 1,\ldots,r_\alpha$ that the matrix $B = \sum_{\alpha = 1}^m \sum_{\beta = 1}^{r_\alpha} p_{\alpha,\beta}\cdot  A_\alpha\cdot q_{\alpha,\beta}$ satisfies $(B)_{ii} = (B)_{jj}$ and $(B)_{ij}\neq 0$. In other words,
	\begin{equation*}
		\begin{cases}
			\sum\limits_{\beta = 1}^{r_1} p_{1,\beta}((A_1)_{ii} - (A_1)_{jj}) q_{1,\beta} + \ldots + \sum\limits_{\beta = 1}^{r_m} p_{m,\beta}((A_m)_{ii} - (A_m)_{jj}) q_{m,\beta} = 0,\\
			
			\sum\limits_{\beta= 1}^{r_1} p_{1,\beta}\cdot (A_1)_{ij}\cdot q_{1,\beta} + \ldots + \sum\limits_{\beta = 1}^{r_m} p_{m,\beta}\cdot(A_m)_{ij}\cdot q_{m,\beta} \neq 0.
		\end{cases}
	\end{equation*}
	By Lemma~\ref{operator_lemma}, this is equivalent to the requiring that there exist such ${\mathbb F}$-linear operators $\varphi_1,\ldots,\varphi_m$ on the space ${\mathcal R}$ that
	\begin{equation*}
		\begin{cases}
			\varphi_1((A_1)_{ii} - (A_1)_{jj}) + \ldots + \varphi_m((A_m)_{ii} - (A_m)_{jj}) = 0,\\
			
			\varphi_1((A_1)_{ij}) + \ldots + \varphi_m((A_m)_{ij}) \neq 0,
		\end{cases}
	\end{equation*}
	Note that the vector $v\neq 0$ by Item~1. It remains to apply Lemma~\ref{system_of_operators_lemma}.
\end{proof}

Next, we deduce a formula for the number of matrix tuples that together with all scalars generate an incidence ring over a full matrix ring.

\begin{lemma}\label{mat_ring_formula}
	Let ${\mathcal R} \cong M_k({\mathbb F}_q)$ be the ring of all $k\times k$ matrices over a~finite filed~${\mathbb F}_q$, $k\geq 1$. Consider an incidence ring ${\mathcal A} = {\mathcal A}_n(\preceq,{\mathcal R})$, $n\geq 1$.  Then for natural $m\geq {\mathrm  {mgen}}~{\mathcal A}$, we have
	\begin{equation*}
		|{\text{Gen}}_m({\mathcal A},{\mathcal R})|= 
		\dfrac{q^{k^2 m}! }{(q^{k^2 m} - n)!}\cdot (q^{k^2 m} - q)^c \cdot q^{{k^2 m}(\rho - n - c)}.
	\end{equation*}
	Here $\rho$ and $c$ are cardinalities of the partial order and the covering relation, respectively, that is
	\begin{equation*}
		\rho= |\preceq| = |\{(i,j)~|~i\preceq j\}|,\quad c = |\prec:| = |\{(i,j)~|~i\prec: j\}|.
	\end{equation*}
\end{lemma}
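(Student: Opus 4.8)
The plan is to apply the generation criterion of Lemma~\ref{criterion_reformulation_over_matrix_ring} and then count the admissible tuples by an independent product over the entry positions of ${\mathcal A}$. A tuple $(A_1,\ldots,A_m)\in{\mathcal A}^m$ is specified freely by choosing, for every position $(i,j)$ with $i\preceq j$, the vector of its entries $u_{ij}=((A_1)_{ij},\ldots,(A_m)_{ij})\in{\mathcal R}^m$. Regarding ${\mathcal R}\cong M_k({\mathbb F}_q)$ as an ${\mathbb F}_q$-space of dimension $k^2$, the set ${\mathcal R}^m$ becomes an ${\mathbb F}_q$-space of dimension $k^2m$ with $|{\mathcal R}^m|=q^{k^2m}$. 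The positions fall into three disjoint classes: the $n$ diagonal positions $i=j$; the $c$ covering positions $i\prec: j$; and the remaining $\rho-n-c$ strictly comparable but non-covering positions.

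The crucial observation is that the two conditions of Lemma~\ref{criterion_reformulation_over_matrix_ring} decouple across these classes. Condition~1 involves only the $n$ diagonal vectors $d_i=u_{ii}$, demanding that they be pairwise distinct. Condition~2 involves, for each covering pair $i\prec: j$, only the difference $v=d_i-d_j$ (determined by the diagonal) and the single off-diagonal vector $w=u_{ij}$. Since distinct covering pairs use distinct off-diagonal positions, and the non-covering strict positions occur in neither condition, I may first fix the diagonal subject to Condition~1 and then choose each off-diagonal vector independently. Hence the count factors as a product over the three classes.

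For the diagonal, the number of ways to pick $n$ pairwise distinct elements $d_1,\ldots,d_n$ of ${\mathcal R}^m$ is the falling factorial $q^{k^2m}!/(q^{k^2m}-n)!$. Each of the $\rho-n-c$ non-covering strict positions is unconstrained and contributes $q^{k^2m}$ choices, giving $q^{k^2m(\rho-n-c)}$. For a covering pair, Condition~1 already forces $d_i\neq d_j$, so $v\neq 0$; with $v\neq 0$ the requirement that $\{v,w\}$ be linearly independent over ${\mathbb F}_q$ is exactly $w\notin{\mathbb F}_q v$, where the line ${\mathbb F}_q v=\{\lambda v\mid\lambda\in{\mathbb F}_q\}$ has precisely $q$ points. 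Thus each covering position admits $q^{k^2m}-q$ vectors $w$, a count independent of the particular diagonal. Multiplying over the $c$ covering positions yields $(q^{k^2m}-q)^c$, and the product of the three factors is the asserted formula.

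The step I expect to require the most care is the justification of the factorization: one must check that at each covering position the admissible number $q^{k^2m}-q$ depends on the diagonal only through $v\neq 0$ (guaranteed by Condition~1), and that the per-position constraints are genuinely independent, so that summing over the diagonal choices simply multiplies the diagonal count by the fixed factor $(q^{k^2m}-q)^c$. A secondary point is the correct reading of linear independence over ${\mathbb F}_q$: it is here that $v\neq 0$ is used to replace ``$\{v,w\}$ dependent'' by ``$w$ lies on the line ${\mathbb F}_q v$'' of cardinality $q$. Finally, the hypothesis $m\geq{\mathrm{mgen}}\,{\mathcal A}$ ensures $q^{k^2m}\geq n$, so the falling factorial is well defined and the set ${\text{Gen}}_m({\mathcal A},{\mathcal R})$ is nonempty.
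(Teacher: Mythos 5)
Your proof is correct and takes essentially the same route as the paper's: invoke the criterion of Lemma~\ref{criterion_reformulation_over_matrix_ring}, count the diagonal choices as a falling factorial, observe that for each covering pair the forbidden set for $w$ is the line ${\mathbb F}_q v$ of cardinality $q$ (using $v\neq 0$ from Condition~1), and let the remaining $\rho-n-c$ positions be free. The only omission is the case $n=1$, which Lemma~\ref{criterion_reformulation_over_matrix_ring} excludes (it assumes $n\geq 2$) but where every tuple generates and the formula reduces to $q^{k^2m}$; the paper disposes of this case separately at the start.
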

\begin{proof}
	If $n=1$, then $\rho = 1$, $c =0$ and $|{\text{Gen}}_m({\mathcal A},{\mathcal R})| = |{\mathcal R}|^m$, which is consistent with the formula. Let $n\geq 2$. We need to count matrix tuples $(A_1,\ldots, A_m)\in {\text{Gen}}_m({\mathcal A},{\mathcal R})$.
	
	[{\em Step 1.}] Consider the diagonal elements of the matrices $A_1,\ldots, A_m$. All~of them are contained in the matrix $\Delta$ from Lem\-ma~\ref{criterion_reformulation_over_matrix_ring}. All its rows must~be pairwise distinct by Item~1. So, the number of ways to choose the matrix $\Delta$ equals 
	\[|{\mathcal R}|^m(|{\mathcal R}|^m - 1)\cdot\ldots\cdot (|{\mathcal R}|^m -(n-1)) = \dfrac{|{\mathcal R}|^m!}{(|{\mathcal R}|^m - n)!} =\dfrac{q^{k^2 m}! }{(q^{k^2 m} - n)!}.\] Note~that $m\geq{\mathrm  {mgen}}~{\mathcal A}$ by our assumption and ${\mathrm  {mgen}}~{\mathcal A}\geq 	\lceil \log_{|{\mathcal R}|}n\rceil $ according to \citep[Theorem~1.2]{Ko23}. From this, at least one matrix $\Delta$ does exist.
	
	[{\em Step 2.}] Next, we treat the entries $(A_\alpha)_{ij}$ for $i\prec : j$ and $\alpha=1,\ldots,m$. According~to Item~2 of Lemma~\ref{criterion_reformulation_over_matrix_ring}, the vectors $v,w$ must be linearly independent over ${\mathbb F}$. The vector $v$ is uniquely determined by the matrix~$\Delta$. There are exactly $|{\mathcal R}|^m - q = q^{k^2 m} - q$ ways to choose the vector $w$ that  is not proportional to $v$. The~number of ways to choose the vectors~$w$ for all pairs $i\prec: j$ is equal to $(q^{k^2 m} - q)^c$.
	
	[{\em Step 3.}] It remains to consider the entries $(A_\alpha)_{ij}$ for $i\neq j$, $i\not\prec : j$ and $\alpha=1\ldots,m$. Lemma~\ref{criterion_reformulation_over_matrix_ring} ensures that all of them can be arbitrary. There~are ${m(\rho - n - c)}$ such entries. The number of ways to choose them equals $|{\mathcal R}|^{m(\rho - n - c)} = q^{{k^2 m}(\rho - n - c)}$.
	
	Multiplying the quantities obtained during Steps~1-3 we obtain the required expression for $|{\text{Gen}}_m({\mathcal A},{\mathcal R})|$.
\end{proof}

\begin{remark}
	When $k=m=1$ and $c=0$, the second multiplier $(q^{k^2 m} - q)^c$ becomes $0^0$, which should be interpreted as $1$. This follows from Step~2 of the proof.
\end{remark}

\section{Finite incidence rings}\label{finite_r}

In this section, we calculate the cardinality $|{\text{Gen}}_m({\mathcal A}, {\mathcal R})|$  for an arbitrary finite incidence ring ${\mathcal A}$ (Theorem~\ref{number_of_gen}). This requires two technical lemmas. The first one deals with the case when ${\mathcal R}$ is a direct product of rings.

\begin{lemma}\label{direct_prod_generators}
	Consider an incidence ring ${\mathcal A} = {\mathcal A}_n(\preceq,{\mathcal R})$, $n\geq 1$. Assume that ${\mathcal R}$ contains a set $\{e_1,\ldots,e_d\}$ of central pairwise orthogonal idempotents and $\sum_{i = 1}^d e_i  = 1_{\mathcal R}$. Then the following conditions are equivalent for a~nonempty subset $S{\subseteq} {\mathcal A}$:
	
	1) $S\cup {\mathcal R} I_n$ generates ${\mathcal A}$ as a ring;
	
	2) $e_i S\cup e_i{\mathcal R} I_{n}$ generates $e_i{\mathcal A}_i$ as a ring for each $i = 1,\ldots,d$. 
\end{lemma}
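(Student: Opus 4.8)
The plan is to use the central orthogonal idempotents to split everything into a direct product. Since $e_1,\ldots,e_d$ are central, pairwise orthogonal, and sum to $1_{\mathcal R}$, they decompose ${\mathcal R}$ as an internal direct product ${\mathcal R}=\bigoplus_{i=1}^d e_i{\mathcal R}$, and this induces a ring decomposition ${\mathcal A}=\bigoplus_{i=1}^d e_i{\mathcal A}$ with $e_i{\mathcal A}\cong{\mathcal A}_n(\preceq,e_i{\mathcal R})$ (this is the factor $e_i{\mathcal A}_i$ of condition~2). The key observation I would record first is that each scalar matrix $e_iI_n$ is a \emph{central idempotent} of ${\mathcal A}$ that already lies in ${\mathcal R} I_n$, and that for a central idempotent $e$ and any elements $x_1,\ldots,x_\ell\in{\mathcal A}$ one has $(e I_n)x_1\cdots x_\ell=(e I_n x_1)(e I_n x_2)\cdots(e I_n x_\ell)$, because $e I_n$ commutes with everything and is idempotent. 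Consequently left multiplication by $e_i I_n$ is a surjective ring homomorphism $\pi_i\colon{\mathcal A}\to e_i{\mathcal A}$ with $\pi_i(1_{\mathcal A})=e_iI_n=1_{e_i{\mathcal A}}$, sending ${\mathcal R} I_n$ onto $e_i{\mathcal R} I_n$ and $S$ onto $e_iS$.

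For the implication $1)\Rightarrow 2)$ I would simply push the generating set through $\pi_i$. A surjective unital ring homomorphism carries a generating set to a generating set, so from the hypothesis that $S\cup{\mathcal R} I_n$ generates ${\mathcal A}$ we get that $\pi_i(S\cup{\mathcal R} I_n)=e_iS\cup e_i{\mathcal R} I_n$ generates the image $e_i{\mathcal A}$ as a ring, for every $i=1,\ldots,d$. This is exactly condition~2), so this direction is immediate once the homomorphism $\pi_i$ is set up.

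For the converse $2)\Rightarrow 1)$, set ${\mathcal B}=\langle{S\cup{\mathcal R} I_n}\rangle_{{\text{Ring}}}$. Because ${\mathcal B}$ contains each $e_iI_n\in{\mathcal R} I_n$, and these are pairwise orthogonal central idempotents summing to $I_n$, the subring splits as ${\mathcal B}=\bigoplus_{i=1}^d e_i{\mathcal B}$. The decisive step is to identify $e_i{\mathcal B}$ with the subring of $e_i{\mathcal A}$ generated by $e_i(S\cup{\mathcal R} I_n)=e_iS\cup e_i{\mathcal R} I_n$: using the distributivity of $e_iI_n$ over products noted above, applying $\pi_i$ to any word in $S\cup{\mathcal R} I_n$ produces a word in $e_iS\cup e_i{\mathcal R} I_n$, and every such word arises this way, so $e_i{\mathcal B}=\langle{e_iS\cup e_i{\mathcal R} I_n}\rangle_{{\text{Ring}}}$. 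By hypothesis~2) this equals $e_i{\mathcal A}$ for each $i$, whence ${\mathcal B}=\bigoplus_{i=1}^d e_i{\mathcal A}={\mathcal A}$, which is condition~1).

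The main point requiring care is precisely this last identification in the $2)\Rightarrow 1)$ direction: showing that the $i$-th component $e_i{\mathcal B}$ of the generated subring coincides with the subring generated by the $i$-th components of the generators. What makes this work is twofold — that $e_iI_n$ itself belongs to ${\mathcal R} I_n$ (so ${\mathcal B}$ is forced to respect the block decomposition), and that a central idempotent distributes across products of ring elements. Without the availability of the scalars $e_iI_n$ inside the generated subring, ${\mathcal B}$ need not decompose as $\bigoplus_i e_i{\mathcal B}$ and the argument would collapse; this is the only subtle place in the proof, the rest being formal manipulation of the direct-product structure.
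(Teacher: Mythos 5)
Your proof is correct and follows essentially the same route as the paper's: the forward direction pushes the generators through the projection onto $e_i{\mathcal A}$, and the converse uses the fact that $e_iI_n\in{\mathcal R} I_n$ to conclude that the generated subring contains each $e_i{\mathcal A}$ and hence their sum ${\mathcal A}$. The paper's version is merely terser, skipping the explicit decomposition ${\mathcal B}=\bigoplus_i e_i{\mathcal B}$ and directly observing $\langle{S\cup{\mathcal R} I_n}\rangle_{\text{Ring}}\supseteq\langle{e_iS\cup e_i{\mathcal R} I_n}\rangle_{\text{Ring}}=e_i{\mathcal A}$ before summing over $i$.
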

\begin{proof}
	1) $\Rightarrow$ 2) Since the set $S\cup{\mathcal R} I_n$ generates ${\mathcal A}$, its projection $e_i S\cup e_i {\mathcal R} I_n$ generates~$e_i{\mathcal A}$.
	
	2) $\Rightarrow$ 1) The set ${\mathcal R} I_n$ contains $\{e_1 I_n,\ldots, e_d I_n\}$. Hence, ${\langle{S\cup {\mathcal R} I_n}\rangle_{\text{Ring}}}\supseteq {\langle{e_i S\cup e_i{\mathcal R} I_n}\rangle_{\text{Ring}}} = e_i{\mathcal A}.$
	The equality $e_1{\mathcal A}+\ldots+e_d{\mathcal A} = {\mathcal A}$ implies that  ${\langle{S\cup {\mathcal R} I}\rangle_{\text{Ring}}} ={\mathcal A}$.
\end{proof}

The following lemma allows us to consider generators modulo the Jacobson radical of the ring ${\mathcal R}$.

\begin{lemma}\label{J_generators}
	Consider an incidence ring ${\mathcal A} = {\mathcal A}_n(\preceq,{\mathcal R})$, $n\geq 1$. Denote by $J = J({\mathcal R})$ the Jacobson radical of the ring ${\mathcal R}$. Let $\overline{{\mathcal R}} = {\mathcal R}/ J$ and $\overline{\mathcal A} = {{\mathcal A}_n(\preceq,\overline{\mathcal R})}$. Introduce the natural ring homomorphism $\varphi:{\mathcal A}{\longrightarrow}\overline{\mathcal A}$ given by $(\varphi(A))_{ij} = (A)_{ij}+J$ for all $i,j=1,\ldots,n.$ For a nonempty subset $S{\subseteq}{\mathcal A}$, denote $\overline{S}=\varphi(S)$. Then the following conditions are equivalent:
	
	1) $S\cup {\mathcal R} I_n$ generates ${\mathcal A}$ as a ring;
	
	2) $\overline{S}\cup\overline{{\mathcal R} I_n}$ generates $\overline{\mathcal A}$ as a ring.
\end{lemma}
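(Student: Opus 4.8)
The plan is to establish the two implications separately; the implication $1)\Rightarrow 2)$ is routine, while $2)\Rightarrow 1)$ is the heart of the matter and will be handled by a Nakayama-type argument. For $1)\Rightarrow 2)$ I would simply observe that $\varphi$ is a surjective ring homomorphism (entrywise reduction modulo $J$). The image of a generating set under a surjective homomorphism generates the image, so $\overline{S}\cup\overline{{\mathcal R}I_n}=\varphi(S\cup{\mathcal R}I_n)$ generates $\varphi({\mathcal A})=\overline{\mathcal A}$.

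For $2)\Rightarrow 1)$, put ${\mathcal B}=\langle{S\cup{\mathcal R}I_n}\rangle_{\text{Ring}}$; the goal is ${\mathcal B}={\mathcal A}$. The structural observation that makes everything go is that ${\mathcal B}$ is not merely a subring but a \emph{left ${\mathcal R}$-submodule} of ${\mathcal A}$: since ${\mathcal R}I_n\subseteq{\mathcal B}$ and ${\mathcal B}$ is closed under multiplication, $(rI_n)b\in{\mathcal B}$ for all $r\in{\mathcal R}$ and $b\in{\mathcal B}$. Because $\varphi$ is a ring homomorphism, $\varphi({\mathcal B})=\langle{\overline{S}\cup\overline{{\mathcal R}I_n}}\rangle_{\text{Ring}}$, which equals $\overline{\mathcal A}$ by hypothesis; thus $\varphi({\mathcal B})=\overline{\mathcal A}=\varphi({\mathcal A})$, and therefore ${\mathcal A}={\mathcal B}+\ker\varphi$.

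Next I would identify the kernel. An element of $\ker\varphi$ is a matrix of ${\mathcal A}$ all of whose entries lie in $J=J({\mathcal R})$, and any such matrix $\sum_{i\preceq j}(r_{ij}I_n)E_{ij}$ with $r_{ij}\in J$ lies in $J({\mathcal R})\cdot{\mathcal A}$; hence $\ker\varphi\subseteq J({\mathcal R}){\mathcal A}$, and consequently ${\mathcal A}={\mathcal B}+J({\mathcal R}){\mathcal A}$. Passing to $M={\mathcal A}/{\mathcal B}$, viewed as a left ${\mathcal R}$-module, this last equality reads $M=J({\mathcal R})M$. Since ${\mathcal A}$ is free of finite rank over ${\mathcal R}$ with basis $\{E_{ij}\mid i\preceq j\}$, the quotient $M$ is finitely generated, so Nakayama's lemma forces $M=0$, i.e. ${\mathcal B}={\mathcal A}$.

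The main obstacle is precisely the conversion of the ring-generation hypothesis into a statement amenable to Nakayama's lemma, and there are two points requiring care. First, one must recognize ${\mathcal B}$ as an ${\mathcal R}$-module so that $M={\mathcal A}/{\mathcal B}$ is a module rather than just a quotient group; this is exactly where the presence of the scalar matrices ${\mathcal R}I_n$ in the generating set is used. Second, one must locate $\ker\varphi$ inside $J({\mathcal R}){\mathcal A}$ so that the hypothesis $M=J({\mathcal R})M$ of Nakayama's lemma is met. I note in passing that this argument uses only the finiteness of $n$ (finite rank of ${\mathcal A}$ over ${\mathcal R}$) and never the finiteness of ${\mathcal R}$ itself.
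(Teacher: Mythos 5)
Your proof is correct and takes essentially the same route as the paper: both reduce $2)\Rightarrow 1)$ to Nakayama's lemma for the finitely generated ${\mathcal R}$-module ${\mathcal A}/{\mathcal B}$, after noting that ${\mathcal R}I_n\subseteq{\mathcal B}$ makes ${\mathcal B}$ an ${\mathcal R}$-submodule and that $\ker\varphi$ consists of matrices with entries in $J({\mathcal R})$. The only (cosmetic) difference is that you bound $\ker\varphi$ directly by $J({\mathcal R}){\mathcal A}$, whereas the paper first places it inside $J({\mathcal A})$ via Voss's formula for the radical of an incidence ring; your version is, if anything, slightly more self-contained.
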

\begin{proof}
	1) $\Rightarrow$ 2) It follows from the fact that $\varphi$ is a surjective homomorphism.
	
	2) $\Rightarrow$ 1) Denote by ${\mathcal B}$ the ring generated by the set $S\cup{\mathcal R} I_n$. By Item~2, $\varphi({\mathcal B}) = {\mathcal A}$ and so ${\mathcal B} + \sum_{i\preceq j} E_{ij} J = {\mathcal A}$. However, $\sum_{i\preceq j} E_{ij} J {\subseteq} J({\mathcal A})$ according to Equation~\eqref{eqJ}. Hence, ${\mathcal B} + {\mathcal A} J({\mathcal A}) = {\mathcal A}$. At the same time, ${\mathcal A}$ is finitely-generated by $\{E_{ij}\}_{i\preceq j}$ as a right ${\mathcal R}$-module. The noncommutative version of Nakayama's lemma implies that ${\mathcal B} = {\mathcal A}$, see \citep[Sec.~4.2 for $R={\mathbb Z}$ ]{Pierce82}.
\end{proof}

Now we are ready to prove the main theorem of the paper.

\begin{theorem}\label{number_of_gen}
	Let ${\mathcal R}$ be a finite ring. Consider the Wedderburn--Artin decomposition modulo the Jacobson radical ${\mathcal R}/J({\mathcal R})\cong M_{n_1}({\mathbb F}_{q_1})\times\ldots \times M_{n_d}({\mathbb F}_{q_d})$, where each ${\mathbb F}_{q_i}$ is the finite field of cardinality $q_i$, $n_i\geq 1$. Let ${\mathcal A} = {\mathcal A}_n(\preceq,{\mathcal R})$ be the incidence ring, $n\geq 1$. Then for any natural $m\geq {\mathrm  {mgen}}({\mathcal A})$, we have
	\begin{equation*}
		|{\text{Gen}}_m({\mathcal A},{\mathcal R})|= |J({\mathcal R})|^{m\rho}\cdot\prod\limits_{i = 1}^d \frac{{q_i}^{n_i^2 m}! }{(q_i^{n_i^2 m} - n)!}\cdot (q_i^{n_i^2 m} - q_i)^c \cdot q_i^{{{n_i}^2 m}(\rho - n - c)}.
	\end{equation*}
	Here $\rho$ and $c$ are cardinalities of the partial order and the covering relation, respectively, that is
	\begin{equation*}
		\rho = |\preceq| = |\{(i,j)~|~i\preceq j\}|,\quad c = |\prec:| = |\{(i,j)~|~i\prec: j\}|.
	\end{equation*}
	Note that the formula uses the convention $0^0=1$.
\end{theorem}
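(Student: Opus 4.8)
The plan is to reduce the general finite ring case to the matrix-algebra case already handled in Lemma~\ref{mat_ring_formula}, using the two reduction lemmas (Lemma~\ref{J_generators} and Lemma~\ref{direct_prod_generators}) to strip off the Jacobson radical and split across the Wedderburn--Artin factors. The key observation is that the generating condition in Item~1 of Lemma~\ref{J_generators} depends only on the image modulo $J({\mathcal R})$, so the radical entries are completely free and contribute a multiplicative factor that accounts for all tuples. I expect Step~1 below (disentangling the radical contribution) to require the most care, because one must verify that the count factors cleanly as a product over the free radical entries times the count over the quotient.

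\textbf{Step 1 (factor out the radical).} First I would apply Lemma~\ref{J_generators} with $\overline{\mathcal R}={\mathcal R}/J({\mathcal R})$ and $\overline{\mathcal A}={\mathcal A}_n(\preceq,\overline{\mathcal R})$. Each matrix $A_\alpha\in{\mathcal A}$ has $\rho$ nonzero entries (one for each pair $i\preceq j$), and each such entry lies in ${\mathcal R}$. Lemma~\ref{J_generators} says that whether $(A_1,\ldots,A_m)$ generates depends only on the reductions $\overline{A_\alpha}\in\overline{\mathcal A}$, i.e.\ on the cosets of the entries modulo $J({\mathcal R})$. Hence the fibre of the reduction map ${\text{Gen}}_m({\mathcal A},{\mathcal R})\to{\text{Gen}}_m(\overline{\mathcal A},\overline{\mathcal R})$ over any generating tuple has size $|J({\mathcal R})|^{m\rho}$, since each of the $m\rho$ entries can be perturbed independently within its $J({\mathcal R})$-coset. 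This yields
\begin{equation*}
|{\text{Gen}}_m({\mathcal A},{\mathcal R})| = |J({\mathcal R})|^{m\rho}\cdot|{\text{Gen}}_m(\overline{\mathcal A},\overline{\mathcal R})|.
\end{equation*}
I would also remark here that $\mathrm{mgen}({\mathcal A})=\mathrm{mgen}(\overline{\mathcal A})$ by Lemma~\ref{J_generators}, so the hypothesis $m\geq\mathrm{mgen}({\mathcal A})$ carries over.

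\textbf{Step 2 (split across the simple factors).} Now $\overline{\mathcal R}\cong\prod_{i=1}^d M_{n_i}({\mathbb F}_{q_i})$ is semisimple, so its identity decomposes as a sum of central orthogonal idempotents $e_1,\ldots,e_d$, where $e_i$ projects onto the $i$-th factor. Applying Lemma~\ref{direct_prod_generators}, a tuple generates $\overline{\mathcal A}$ if and only if its $i$-th projection generates ${\mathcal A}_n(\preceq,M_{n_i}({\mathbb F}_{q_i}))$ for every $i$. Because the projections onto distinct factors are chosen independently, the count factors as a product:
\begin{equation*}
|{\text{Gen}}_m(\overline{\mathcal A},\overline{\mathcal R})| = \prod_{i=1}^d |{\text{Gen}}_m({\mathcal A}_n(\preceq,M_{n_i}({\mathbb F}_{q_i})),M_{n_i}({\mathbb F}_{q_i}))|.
\end{equation*}

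\textbf{Step 3 (apply the matrix formula and assemble).} Finally, for each factor I would invoke Lemma~\ref{mat_ring_formula} with $k=n_i$ and $q=q_i$, which gives each term as
\begin{equation*}
\frac{q_i^{n_i^2 m}!}{(q_i^{n_i^2 m}-n)!}\cdot(q_i^{n_i^2 m}-q_i)^c\cdot q_i^{n_i^2 m(\rho-n-c)}.
\end{equation*}
Multiplying the radical factor from Step~1 by the product over factors from Steps~2--3 gives precisely the claimed formula. The only subtlety worth checking is that Lemma~\ref{mat_ring_formula} requires $m\geq\mathrm{mgen}({\mathcal A}_n(\preceq,M_{n_i}({\mathbb F}_{q_i})))$ for each $i$; this follows because $\mathrm{mgen}(\overline{\mathcal A})=\max_i\mathrm{mgen}$ of the factors, which is bounded above by $\mathrm{mgen}({\mathcal A})\leq m$, so every factor count is a genuine (nonempty-compatible) application of the lemma.
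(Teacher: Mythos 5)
Your proposal is correct and follows essentially the same route as the paper: factor out the radical via Lemma~\ref{J_generators} (with the fibre count $|J({\mathcal R})|^{m\rho}$), split over the Wedderburn--Artin idempotents via Lemma~\ref{direct_prod_generators}, and apply Lemma~\ref{mat_ring_formula} to each simple factor. Your extra remark verifying that the hypothesis $m\geq{\mathrm{mgen}}$ transfers to each factor is a sensible addition the paper leaves implicit.
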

\begin{proof}
	In terms of Lemma~\ref{J_generators}, given any $\overline{A}\in\overline{{\mathcal A}}$ then 
	\[|\varphi^{-1}(\overline{A})| =\prod_{i\preceq j} | (A)_{ij} + J | = |J|^\rho.\]
	For a tuple $(\overline{A}_1,\ldots,\overline{A}_m)\in\overline{\mathcal A}^m$, we have $|\varphi^{-1}(\overline{A}_1)\times\ldots\times \varphi^{-1}(\overline{A}_m)| =  |J|^{m\rho}$. It follows from Lemma~\ref{J_generators} that
	\[|{\text{Gen}}_m({\mathcal A},{\mathcal R}) | = |J|^{m\rho} \cdot |{\text{Gen}}_m(\overline{\mathcal A},\overline{\mathcal R})|. \]
	
	Next, let $\overline{e}_1,\ldots,\overline{e}_d$ be such set of orthogonal central idempotents in $\overline{\mathcal R}$ that $\sum_i \overline{e}_i = 1_{\overline{{\mathcal R}}}$ and $\overline{e}_i\overline{\mathcal R}\cong M_{n_i}({\mathbb F}_{q_i})$. Note that $\overline{e}_i\overline{\mathcal A} = {\mathcal A}_n(\preceq, \overline{e}_i\overline{\mathcal R})$. Applying Lemma~\ref{direct_prod_generators} to the ring $
	\overline{{\mathcal A}}={\mathcal A}_n(\preceq,\overline{\mathcal R})$ we obtain
	\[	|{\text{Gen}}_m(\overline{\mathcal A},\overline{\mathcal R})| = 	|{\text{Gen}}_m(\overline{e}_1 \overline{\mathcal A},\overline{e}_1\overline{\mathcal R})| \cdot\ldots\cdot 	|{\text{Gen}}_m(\overline{e}_d \overline{\mathcal A},\overline{e}_d\overline{\mathcal R})|.\]
	The expression for $|{\text{Gen}}_m(\overline{e}_i \overline{\mathcal A},\overline{e}_i\overline{\mathcal R})|$ is given by Lemma~\ref{mat_ring_formula}. 
\end{proof}

The previous theorem can be reformulated in terms of the standard discrete probability measure on ${\mathcal A}^m$.

\begin{corollary}
	In notations of Theorem~\ref{number_of_gen} the probability that a tuple of $m$~matrices $(m\geq{\mathrm  {mgen}}({\mathcal A}))$ together with all scalar matrices generate the incidence ring is equal~to
	\[P = \prod\limits_{i = 1}^d  \left(1 - \dfrac{1}{q_i^{n_i^2 m-1}}\right)^{c} ~ \prod\limits_{\ell = 1}^{n-1} \left(1 - \dfrac{\ell}{q_i^{n_i^2 m}}\right).\]
\end{corollary}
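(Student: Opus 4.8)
The plan is to obtain $P$ as the ratio of the counting formula from Theorem~\ref{number_of_gen} to the total number of tuples, with respect to the uniform (standard discrete) measure on $\mathcal{A}^m$, so that
\[
P = \frac{|\text{Gen}_m(\mathcal{A},\mathcal{R})|}{|\mathcal{A}^m|} = \frac{|\text{Gen}_m(\mathcal{A},\mathcal{R})|}{|\mathcal{A}|^m}.
\]
The numerator is already given by the theorem, so the task reduces to computing the denominator and then simplifying. First I would record that $\mathcal{A}$ is a free right $\mathcal{R}$-module with the basis $\{E_{ij}\mid i\preceq j\}$ of rank $\rho$, whence $|\mathcal{A}| = |\mathcal{R}|^{\rho}$ and $|\mathcal{A}|^m = |\mathcal{R}|^{m\rho}$. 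Next I would factor $|\mathcal{R}|$ through the Jacobson radical, using $|\mathcal{R}| = |J(\mathcal{R})|\cdot|\mathcal{R}/J(\mathcal{R})|$ together with the Wedderburn--Artin decomposition $|\mathcal{R}/J(\mathcal{R})| = \prod_{i=1}^d q_i^{n_i^2}$, to get $|\mathcal{A}|^m = |J(\mathcal{R})|^{m\rho}\prod_{i=1}^d q_i^{n_i^2 m\rho}$.

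Dividing the theorem's formula by this expression, the factor $|J(\mathcal{R})|^{m\rho}$ cancels at once, and $P$ becomes a product over $i=1,\ldots,d$. Abbreviating $Q_i = q_i^{n_i^2 m}$, each factor takes the form
\[
\frac{1}{Q_i^{n+c}}\cdot\frac{Q_i!}{(Q_i-n)!}\cdot (Q_i-q_i)^{c},
\]
after collecting the powers $q_i^{n_i^2 m(\rho-n-c)}$ and $q_i^{n_i^2 m\rho} = Q_i^{\rho}$ from numerator and denominator. I would then split this into two independent simplifications. For the falling-factorial part, writing $\frac{Q_i!}{(Q_i-n)!} = \prod_{\ell=0}^{n-1}(Q_i-\ell)$ and distributing $n$ copies of $Q_i^{-1}$ gives $\prod_{\ell=0}^{n-1}(1-\ell/Q_i)$, whose $\ell=0$ factor equals $1$ and may be dropped, leaving the product $\prod_{\ell=1}^{n-1}(1-\ell/Q_i)$. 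For the covering part, $(Q_i-q_i)^{c}Q_i^{-c} = (1-q_i/Q_i)^{c}$, and the key identity $q_i/Q_i = q_i^{\,1-n_i^2 m} = q_i^{-(n_i^2 m - 1)}$ turns this into $(1-q_i^{-(n_i^2 m-1)})^{c}$. Assembling the two pieces over all $i$ yields exactly the claimed expression for $P$.

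The argument is essentially bookkeeping, so there is no deep obstacle; the points requiring genuine care are the correct identification of the denominator as $|\mathcal{R}|^{m\rho}$ via the module rank $\rho$ (not some other exponent), the clean cancellation of the radical factor, and the two algebraic rewrites $q_i/Q_i = q_i^{-(n_i^2 m-1)}$ and the absorption of the $\ell=0$ term. I would also flag, as the theorem already does, the degenerate cases where the convention $0^0=1$ is in force (for instance $c=0$, or $n=1$ so that the empty product $\prod_{\ell=1}^{0}$ equals $1$), confirming that the simplified formula remains valid there and is consistent with the boundary behavior recorded in the remark following Lemma~\ref{mat_ring_formula}.
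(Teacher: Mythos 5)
Your proposal is correct and follows essentially the same route as the paper: it computes $P$ as the ratio $|{\text{Gen}}_m({\mathcal A},{\mathcal R})|/|{\mathcal A}^m|$, identifies $|{\mathcal A}^m|=|{\mathcal R}|^{m\rho}=|J({\mathcal R})|^{m\rho}\prod_i q_i^{n_i^2 m\rho}$ via the Wedderburn--Artin decomposition, cancels the radical factor, and simplifies the falling factorial and the $(Q_i-q_i)^c$ term exactly as in the paper's proof. The only difference is that you spell out the final algebraic rewrites in more detail than the paper, which simply states that the resulting expression coincides with the claimed one.
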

\begin{proof}
	The cardinality of the direct product ${\mathcal A}^m$ can be represented as
	\[|{\mathcal A}^m| = |{\mathcal R}|^{m\rho} = |J({\mathcal R})|^{m\rho}\cdot |{\mathcal R}/ J({\mathcal R})|^{m\rho} = |J({\mathcal R})|^{m\rho}\cdot \prod_{i = 1}^d q_i^{n^2_im\rho}.\]
	It follows from the theorem that the required probability is equal to
	\[P = \dfrac{|{\text{Gen}}_m({\mathcal A},{\mathcal R})|}{|{\mathcal A}^m|}= \prod\limits_{i = 1}^d \frac{{q_i}^{n_i^2 m} \cdot\ldots\cdot ({q_i}^{n_i^2 m} - (n - 1))(q_i^{n_i^2 m} - q_i)^c}{ q_i^{{{n_i}^2 m}(n + c)}},\]
	which coincide with the expression from the corollary.
\end{proof}

\section{Incidence algebras over the fields of real and complex numbers}\label{measure}

Now we consider generators of an incidence algebra ${\mathcal A} =  {\mathcal A}_n(\preceq,{\mathbb R})$ over the field ${\mathbb R}$ of real numbers. In this case, the direct sum  ${\mathcal A}^m$ can be viewed as the arithmetic vector space ${\mathbb R}^{m\rho}$, $\rho=\dim_{\mathbb R}{\mathcal A}$ with the standard Euclidean norm. Given a tuple of matrices $(A_1,\ldots,A_m)\in{\mathcal A}^m$, then ${||(A_1,\ldots,A_m)||} = {\sqrt{\sum_{k = 1}^m\sum_{i\preceq j} (A_k)^2_{ij}}}.$ A~nonzero tuple $(A_1,\ldots,A_m)$ generates the algebra ${\mathcal A}$ iff $(\lambda A_1,\ldots, \lambda A_m)$ generates ${\mathcal A}$ for any $\lambda\in{\mathbb R}\setminus\{0\}$. We may choose $\lambda = \dfrac{1}{||(A_1,\ldots,A_m)||}$ and consider matrix tuples on the unit sphere ${\bf S}{\subseteq} {\mathbb R}^{m\rho}$ of dimension~$m\rho - 1$. In other words, the intersection ${\bf S}\cap {\text{Gen}}_m({\mathcal A},{\mathbb R})$ will be considered.

There exists only one uniformly distributed Borel regular probability measure on the Euclidean sphere \citep[Theorem 3.4]{Mat95}. It can be constructed as follows. Let $\Pi{\subseteq}{\bf S}$ be a subset. Consider the cone with the base $\Pi$ and the zero apex
\[{\mathcal K}(\Pi) = \bigcup\limits_{(A_1,\ldots,A_m)\in\Pi} \{(tA_1,\ldots,tA_m)~|~t\in[0,1]\}.\]
The space ${\mathbb R}^{m\rho}$ is equipped with the Lebesgue measure $\mu$. Then the probability measure $P$ on ${\bf S}$ can be defined by
\[P(\Pi) = \dfrac{1}{\mu({\mathcal K}({\bf S}))}\cdot \mu({\mathcal K}(\Pi)) = \dfrac{\Gamma(\frac{m\rho}{2}+1)}{\pi^{\frac{m\rho}{2}}}\cdot \mu({\mathcal K}(\Pi)),\]
where $\mu({\mathcal K}({\bf S}))=\mu({\bf B})$ is the volume of $m\rho$-dimensional unit ball ${\bf B}$ and $\Gamma$~is the Euler's gamma function. So a subset of the sphere is considered to be measurable iff its cone is Lebesgue-measurable.

A similar construction is possible for the algebras over the field of complex numbers ${\mathbb C}$. In this case ${\mathcal A}^{m}$ is identified with ${\mathbb R}^{2m\rho}$ and the probability measure on the sphere ${\bf S}$  of dimension~$2m\rho-1$ is introduced in the same way.

\begin{proposition}\label{algebraic_set}
	Consider an incidence algebra ${\mathcal A} =  {\mathcal A}_n(\preceq,{\mathbb F})$ over any field~${\mathbb F}$. Let~$m\geq 2$ be a natural number. Then ${\text{Gen}}_m({\mathcal A},{\mathbb F})\neq\varnothing$ and the complement ${\mathcal A}^m\setminus {\text{Gen}}_m({\mathcal A},{\mathbb F})$ is an affine algebraic set.
\end{proposition}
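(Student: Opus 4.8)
The plan is to prove the two assertions separately, drawing on the generation criterion already available for the matrix case. First I would establish nonemptiness of $\text{Gen}_m({\mathcal A},{\mathbb F})$. Since ${\mathbb F}$ is a field, it equals $M_1({\mathbb F})$, so the ring criterion of Lemma~\ref{criterion_reformulation_over_matrix_ring} applies with $k=1$: a tuple $(A_1,\ldots,A_m)$ generates ${\mathcal A}$ together with all scalars precisely when (i) the diagonal $m$-tuples $((A_\alpha)_{ii})_\alpha$ are pairwise distinct across $i=1,\ldots,n$, and (ii) for each covering pair $i\prec: j$, the vectors $v$ and $w$ (the differences of diagonal entries and the $(i,j)$-entries) are linearly independent over ${\mathbb F}$. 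To see the set is nonempty when $m\geq 2$, I would exhibit an explicit tuple: for instance one can encode distinct diagonals using the first generator (assigning $n$ distinct scalars down its diagonal, possible since any field with the order $\leq$ has at least $n$ elements, or by using $m\geq 2$ coordinates to separate the diagonals even over ${\mathbb F}_2$) and then use a second generator to place entries making each pair $(v,w)$ linearly independent at every covering relation. This reduces to checking that a $2\times m$ matrix with $m\geq 2$ can be made to have rank $2$ in each required $2$-dimensional projection, which is elementary.

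For the second assertion, the key observation is that the two failure conditions in Lemma~\ref{criterion_reformulation_over_matrix_ring} are each described by the vanishing of polynomials in the matrix entries, treated as affine coordinates on ${\mathcal A}^m\cong{\mathbb F}^{m\rho}$. The failure of Item~1 means two diagonal rows coincide: for some pair $i\neq j$ one has $(A_\alpha)_{ii}=(A_\alpha)_{jj}$ for all $\alpha$, which is an intersection of hyperplanes, hence an algebraic set; taking the union over all pairs $i\neq j$ keeps it algebraic. The failure of Item~2 for a covering pair $i\prec: j$ means the vectors $v,w\in{\mathbb F}^m$ are linearly dependent, equivalently all $2\times 2$ minors of the matrix $\bigl(\begin{smallmatrix} v\\ w\end{smallmatrix}\bigr)$ vanish; each such minor is a polynomial (indeed bilinear) in the entries, so this locus is algebraic, and again the union over covering pairs is algebraic.

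The complement ${\mathcal A}^m\setminus\text{Gen}_m({\mathcal A},{\mathbb F})$ is exactly the set of tuples violating Item~1 \emph{or} Item~2, i.e.\ the union of the algebraic sets described above, which is therefore itself an affine algebraic set. The main subtlety to handle carefully is the passage from the logical statement ``$v,w$ are linearly dependent'' to a system of polynomial equations that is valid uniformly over an arbitrary field ${\mathbb F}$. The clean formulation is the vanishing of all $2\times 2$ minors $(A_\alpha)_{ii}-(A_\alpha)_{jj})\,(A_\beta)_{ij}-((A_\beta)_{ii}-(A_\beta)_{jj})\,(A_\alpha)_{ij}$ for $\alpha<\beta$; this correctly captures linear dependence over any field (including the degenerate case $v=0$), so no case distinction or field-specific argument is needed. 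Once this polynomial description is in place, closure under finite union gives the result immediately, and nonemptiness from the first part confirms the complement is a proper subset.
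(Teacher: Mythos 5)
Your argument for the second (and main) assertion is essentially the paper's own: negate Items~1 and~2 of Lemma~\ref{criterion_reformulation_over_matrix_ring} with $k=1$, observe that the failure of Item~1 for a pair $i\neq j$ is cut out by the linear equations $x_{\alpha ii}-x_{\alpha jj}=0$, that the failure of Item~2 at a covering pair is the vanishing of all $2\times 2$ minors of the matrix with rows $v$ and $w$ (which, as you correctly note, captures linear dependence over any field including the case $v=0$), and conclude by closure of affine algebraic sets under finite unions. That part is correct and matches the paper.

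The gap is in your nonemptiness argument. You claim one can ``separate the diagonals even over ${\mathbb F}_2$'' using $m\geq 2$ coordinates, but Item~1 of the criterion requires the $n$ rows of $\Delta$ to be pairwise distinct elements of ${\mathbb F}^m$, and over ${\mathbb F}_q$ only $q^m$ such rows exist; for $n>q^m$ (e.g.\ $n=5$, $q=2$, $m=2$) no tuple satisfies Item~1 and ${\text{Gen}}_m({\mathcal A},{\mathbb F})=\varnothing$, consistent with the bound ${\mathrm{mgen}}\,{\mathcal A}\geq\lceil\log_{|{\mathbb F}|}n\rceil$ quoted earlier in the paper. Your parenthetical ``any field with the order $\leq$ has at least $n$ elements'' is likewise not true. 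The paper sidesteps this by citing ${\mathrm{mgen}}({\mathcal A})\leq 2$ for incidence algebras over \emph{infinite} fields \citep[Corollary~4.7]{KolMar19}, which is all that is needed for Corollary~\ref{prob_over_R_C}. Your explicit construction does work whenever $|{\mathbb F}|\geq n$: take $A_1$ diagonal with $n$ distinct diagonal entries and $A_2$ with zero diagonal and $(A_2)_{ij}=1$ at each covering pair, so that $v=(d_i-d_j,0)$ and $w=(0,1)$ are independent. So the repair is to restrict the nonemptiness claim to sufficiently large (in particular infinite) fields rather than asserting it over ${\mathbb F}_2$.
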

\begin{proof}
	We have ${\text{Gen}}_m({\mathcal A},{\mathbb F})\neq\varnothing$ because ${\mathrm  {mgen}}({\mathcal A})\leq2$ for incidence algebras over infinite fields \citep[Corollary~4.7]{KolMar19}. For each pair $i,j\in\{1,\ldots,n\}$ with $i\neq j$, introduce the subsets of ${\mathcal A}^m$:
	\begin{align*}
		\Phi_{ij} = \{(A_1,\ldots,A_m)\in {\mathcal A}^m~|&~(A_\alpha)_{ii} = (A_\alpha)_{jj,}~\alpha = 1,\ldots,m\},\\
		\Psi_{ij} = \{(A_1,\ldots,A_m)\in {\mathcal A}^m~|&~\exists(\lambda,\mu)\in{\mathbb F}\times{\mathbb F}\setminus\{(0,0)\}:
		\\\lambda(A_\alpha)_{ij} &+ \mu ((A_\alpha)_{ii} - (A_\alpha)_{jj})=0,~\alpha = 1,\ldots,m\}.
	\end{align*}
	The negation of Items~1,2 of Lemma~\ref{criterion_reformulation_over_matrix_ring} for $k = 1$ implies that
	\[{\mathcal A}^m\setminus {\text{Gen}}_m({\mathcal A},{\mathbb F})=\bigcup~\{\Phi_{ij}~|~i\neq j\} \cup \bigcup~\{\Psi_{ij}~|~i\prec: j\}.\]
	The matrix elements $x_{\alpha ij} = (A_\alpha)_{ij}$ for $\alpha=1,\ldots,m$ and $i\preceq j$ are coordinates in ${\mathcal A}^m\cong {\mathbb F}^{m\rho}$. Then $\Phi_{ij}$ is the set of solutions of the system $x_{\alpha ii} - x_{\alpha jj} =0$, $\alpha = 1,\ldots,m$.
	
	Consider the set $\Psi_{ij}$. It consists of such matrix tuples $(A_1,\ldots,A_m)$ that the matrix
	\[\begin{pmatrix}
		(A_1)_{ii} - (A_1)_{jj} & (A_2)_{ii} - (A_2)_{jj} & \ldots & (A_m)_{ii} - (A_m)_{jj} \\
		(A_1)_{ij} & (A_2)_{ij} & \ldots & (A_m)_{ij} 
	\end{pmatrix}\]
	has rank $\leq 1$. This is equivalent to the requiring that the minors
	\[\begin{vmatrix}
		(A_\alpha)_{ii} - (A_\alpha)_{jj} &  {(A_{\beta})_{ii} - (A_{\beta})_{jj}}\\
		(A_\alpha)_{ij}  & 	(A_{\beta})_{ij} 
	\end{vmatrix} =0\]
	vanish for all $\alpha,\beta\in\{1,\ldots,m\}$. Hence, $\Psi_{ij}$ is also the set of solutions of a system of polynomial equations. Thus, the complement ${\mathcal A}^m\setminus {\text{Gen}}_m({\mathcal A},{\mathbb F})$  is the finite union of affine algebraic sets and so it is affine algebraic as well.
\end{proof}

\begin{corollary}\label{prob_over_R_C}
	Consider an incidence algebra  ${\mathcal A} =  {\mathcal A}_n(\preceq,{\mathbb F})$ over the~fields of real ${\mathbb F} = {\mathbb R}$ or complex numbers ${\mathbb F} = {\mathbb C}$. Then two matrices generate ${\mathcal A}$  with probability 1.
\end{corollary}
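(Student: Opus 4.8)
The plan is to deduce Corollary~\ref{prob_over_R_C} directly from Proposition~\ref{algebraic_set} by showing that a proper affine algebraic set in $\mathbb{R}^{N}$ (or $\mathbb{C}^{N}$, viewed as $\mathbb{R}^{2N}$) has measure zero with respect to the uniform measure $P$ on the sphere. First I would apply Proposition~\ref{algebraic_set} with $m=2$: it gives $\mathrm{Gen}_2(\mathcal{A},\mathbb{F})\neq\varnothing$ and exhibits the complement $Z:=\mathcal{A}^2\setminus\mathrm{Gen}_2(\mathcal{A},\mathbb{F})$ as an affine algebraic set, i.e. the common zero locus of finitely many polynomials. Since $\mathrm{Gen}_2(\mathcal{A},\mathbb{F})$ is nonempty, $Z$ is a \emph{proper} algebraic subset, so at least one of the defining polynomials is not identically zero.

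The key analytic fact is that a proper affine algebraic subset of $\mathbb{R}^{N}$ has Lebesgue measure zero. I would prove this by a standard induction on $N$: a nonzero polynomial $f$ in one variable has finitely many roots, and for $N>1$ one writes $f$ as a polynomial in the last variable with coefficients in the first $N-1$; on the complement of the (lower-dimensional, hence null by induction) locus where all these coefficients vanish, $f$ restricts to a nonzero univariate polynomial with finitely many zeros, so Fubini gives measure zero for the zero set of $f$. Intersecting with the remaining defining equations only shrinks the set, so $Z$ itself is Lebesgue-null in $\mathbb{R}^{N}$ with $N=m\rho=2\rho$ (for $\mathbb{F}=\mathbb{R}$) or $N=2m\rho=4\rho$ (for $\mathbb{F}=\mathbb{C}$, after identifying $\mathbb{C}^{2\rho}$ with $\mathbb{R}^{4\rho}$ and noting the defining equations become real-polynomial conditions on the real and imaginary parts).

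Next I would transfer this null-set statement to the sphere via the cone construction defining $P$. Because $Z$ is closed under scaling by nonzero $\lambda\in\mathbb{F}$ (the generation property is scale-invariant, as already observed in the text preceding the proposition), the set $Z$ is a cone, and $Z\cap\mathbf{S}$ is exactly the base of the cone $\mathcal{K}(Z\cap\mathbf{S})\subseteq Z$. Since $\mu(Z)=0$ and $\mathcal{K}(Z\cap\mathbf{S})\subseteq Z$, we get $\mu(\mathcal{K}(Z\cap\mathbf{S}))=0$, whence by the definition
\[
P(Z\cap\mathbf{S}) = \frac{\Gamma(\tfrac{m\rho}{2}+1)}{\pi^{m\rho/2}}\cdot\mu(\mathcal{K}(Z\cap\mathbf{S})) = 0.
\]
Therefore $P(\mathbf{S}\cap\mathrm{Gen}_2(\mathcal{A},\mathbb{F}))=1$, which is the assertion that two randomly chosen matrices generate $\mathcal{A}$ with probability~$1$.

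I expect the main obstacle to be purely bookkeeping rather than conceptual: one must verify that $Z\cap\mathbf{S}$ is indeed $P$-measurable, i.e. that $\mathcal{K}(Z\cap\mathbf{S})$ is Lebesgue measurable. This is immediate here because an affine algebraic set is closed, hence Borel, and the cone over a scaling-invariant Borel set is Borel; so $Z$ and $\mathcal{K}(Z\cap\mathbf{S})=Z\cap\mathbf{B}$ are measurable. A secondary point needing care is the complex case: I would note explicitly that each determinantal minor and each diagonal-difference equation from the proof of Proposition~\ref{algebraic_set}, when the entries are written as $a+bi$, yields real polynomial equations in the $2N$ real coordinates, so $Z$ remains a real affine algebraic set and the same measure-zero argument applies verbatim.
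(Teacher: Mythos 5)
Your proposal is correct and follows essentially the same route as the paper: invoke Proposition~\ref{algebraic_set} to see that the complement is a proper affine algebraic set, conclude it is Lebesgue-null, and then use the scale-invariance of the generation property to bound $\mu({\mathcal K}({\bf S}\cap V))$ by $\mu(V)=0$. The only difference is cosmetic: you prove the null-set fact for polynomial zero loci directly by Fubini and induction (and spell out measurability and the complex case), whereas the paper simply cites that nonzero analytic functions have null zero sets.
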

\begin{proof}
	Denote $V = 
	{\mathcal A}^2\setminus {\text{Gen}}_2({\mathcal A},{\mathbb F})$. By Proposition~\ref{algebraic_set}, $\mu(V) = 0$ since $V$~is the set of common zeros of polynomials, which are analytic functions. 
	
	Given any $v\in V$ and $\lambda \in {\mathbb{F}}$, then ${\lambda\cdot v}\in V$ by the definition of ${\text{Gen}}_2({\mathcal A},{\mathbb F})$. So the cone ${\mathcal K}( V \cap {\bf S})$ is a subset of~$V$. Then $\mu({\bf B})\cdot P({\bf S}\cap V)=\mu({\mathcal K}({\bf S}\cap V)) \leq \mu(V) = 0$.
\end{proof}


\bmhead{Acknowledgements}
The author is grateful to his scientific supervisor Professor O.V.~Markova for valuable discussions during the preparation of the paper. The~work was financially supported by Theoretical Physics and Mathematics Advancement Foundation ``BASIS'', grant 22-8-3-21-1.

\section*{Statements and Declarations}

\bmhead{Conflict of interest}
The author declares that he has no conflict of interest.

\end{document}